\numberwithin{equation}{section}
\theoremstyle{plain}
	\newtheorem{thm}[equation]{Theorem}
	\newtheorem{prop}[equation]{Proposition}
	\newtheorem{cor}[equation]{Corollary}
	\newtheorem{lem/defn}[equation]{Lemma/Definition}
\theoremstyle{definition}
	\newtheorem{defn}[equation]{Definition}
	\newtheorem{const}[equation]{Construction}
\theoremstyle{remark}
	\newtheorem{rem}[equation]{Remark}
\def\nc{\newcommand}
\def\on{\operatorname}
\nc{\edit}[1]{\marginpar{\footnotesize{#1}}}
\nc{\C}{\mathbb{C}}
\nc{\Z}{\mathbb{Z}}
\nc{\PP}{\mathbb{P}}
\nc{\R}{\mathbb{R}}
\nc{\AAA}{\mathbb{A}}
\nc{\LL}{\mathbb{L}}
\nc{\OO}{\mathcal{O}}
\nc{\X}{\EuScript{X}}
\nc{\sZ}{\EuScript{Z}}
\nc{\id}{{\on{id}}}
\nc\Hom{{\on{Hom}}}
\nc\cone{{\on{cone}}}
\nc{\Rep}{{\on{Rep}}}
\nc\Ob{{\on{Ob}}}
\nc\Spec{{\on{Spec}}}
\nc\Mod{{\on{Mod}}}
\nc\coMod{{\on{coMod}}}
\nc\Perf{{\on{Perf}}}
\nc\End{{\on{End}}}
\nc{\into}{\hookrightarrow}
\nc{\tr}{\on{tr}}
\nc{\ev}{\on{ev}}
\nc{\im}{\on{im}}
\nc{\Mot}{\on{Mot}}
\nc{\pt}{\on{pt}}
\nc{\coker}{\on{coker}}
\nc{\rk}{\on{rank}}
\nc{\TOP}{\on{Top}_{\mathbb{C}}^{s}}
\nc{\gr}{\on{gr}}
\nc{\Catperf}{\text{Cat}^{\text{perf}}}
\nc{\Sym}{\on{Sym}}
\nc{\xra}{\xrightarrow}
\nc{\lra}{\xleftarrow}
\nc{\Bet}{\mathbf{Betti}_{X}}
\nc{\codim}{\on{codim}}
\nc{\Fred}{\on{Fred}}
\nc{\colim}{\on{colim}}
\nc{\KK}{{\bf K}}
\nc{\Sp}{\on{Sp}}
\nc{\onto}{\twoheadrightarrow}
\nc{\A}{\mathbb{A}}
\nc{\Aff}{\on{Aff}}
\nc{\SH}{\on{SH}}
\nc{\QCoh}{\on{QCoh}}
\nc{\Alg}{\on{Alg}}
\nc{\Br}{\on{Br}}
\nc{\ta}{\widetilde{\a}}
\nc{\Shv}{\on{Shv}}
\nc{\GG}{\mathbb{G}}
\nc{\red}{\color{red}}
\nc{\an}{\on{an}}
\nc{\D}{\on{D}}
\nc{\Pre}{\on{Pre}}
\nc{\qc}{\on{qc}}
\nc{\op}{\on{op}}
\nc{\shEnd}{{\mathcal End}}
\nc{\Sph}{\mathbb{S}}
\nc{\Top}{\on{Top}}
\nc{\Map}{\on{Map}}
\nc{\Vect}{\on{Vect}}
\nc{\holim}{\on{holim}}
\def\A{\mathcal{A}}
\def\a{\alpha}
\def\Perf{\on{Perf}}
\def\Sp{\on{Sp}}
\def\QCoh{\on{QCoh}}
\title{The geometry of filtrations}
\author{Tasos Moulinos}
\date{}
\begin{document}
\vspace{18mm} \setcounter{page}{1} \thispagestyle{empty}

\maketitle

\begin{abstract}
We display a symmetric monoidal equivalence between the stable $\infty$-category of filtered spectra, and quasi-coherent sheaves on $\AAA^1 / \GG_m$, the quotient  in the setting of spectral algebraic geometry of the flat affine line by the canonical action of the flat multiplicative group scheme. Via a Tannaka duality argument, we identify the underlying spectrum and associated graded functors with pull-backs of quasi-coherent sheaves along certain morphisms of stacks.  
\end{abstract}

%%%%%%%%%%%
\section{Introduction}
%%%%%%%%%%%
The idea of viewing filtered vector spaces as quasi-coherent sheaves on the stack $\AAA^1 / \GG_m$ goes back to Carlos Simpson in his work on non-abelian Hodge theory \cite{simpson1990nonabelian}. Since then, this paradigm has been used to great effect in exhibiting naturally occuring filtrations on e.g.\ the category of $\mathcal{D}_{X}$-modules over a scheme in characteristic zero (see for example \cite{ben2012loop}). 

In this note, we observe that this identification takes on a particularly neat form in the setting of spectral algebraic geometry over the sphere spectrum, where one takes the basic affine objects to be connective $E_{\infty}$-ring spectra in place of ordinary ``discrete" commutative rings. Gluing these together one obtains spectral schemes; more general homotopy colimit constructions give rise to stacks in this setting. To an arbitrary spectral stack $\EuScript{X}$, one can associate a theory of quasi-coherent sheaves on $\EuScript{X}$. Our main theorem is the following identification of filtered spectra with quasi-coherent sheaves on the spectral stack $\AAA^1 / \GG_m$:

\begin{thm}
\label{main}
There is an equivalence 
$$
\QCoh(\mathbb{A}^1 / \mathbb{G}_{m}) \simeq  \on{Rep}(\Z)
$$
of symmetric monoidal stable  $\infty$-categories. 
\end{thm}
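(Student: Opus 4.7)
The plan is to compute $\QCoh(\AAA^1/\GG_m)$ in two stages: first, using descent along the canonical $\GG_m$-torsor $\AAA^1 \to \AAA^1/\GG_m$ to reduce to $\GG_m$-equivariant modules on $\AAA^1 = \Spec(\Sph[t])$; then translating the equivariant structure into a filtration.

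For the first stage, I would invoke faithfully flat descent for quasi-coherent sheaves in spectral algebraic geometry to obtain an equivalence
$$\QCoh(\AAA^1/\GG_m) \simeq \QCoh^{\GG_m}(\AAA^1),$$
the right-hand side denoting the $\infty$-category of $\GG_m$-equivariant $\Sph[t]$-modules. Concretely, this is realized as the totalization of the cosimplicial diagram $\QCoh(\GG_m^{\times \bullet} \times \AAA^1)$ associated to the simplicial presentation of the quotient stack, and it already carries a symmetric monoidal structure inherited from the pointwise tensor product of quasi-coherent sheaves.

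For the second stage, I would identify $\GG_m$-equivariant $\Sph[t]$-modules with $\Z$-graded $\Sph[t]$-modules: the $\GG_m$-action endows each module with a weight decomposition $M = \bigoplus_n M_n$, and since the coordinate $t$ has weight $1$ under the scaling action, multiplication by $t$ assembles into a sequence of structure maps $M_n \to M_{n+1}$. This datum is precisely that of a functor out of the poset $\Z$ into $\Sp$, i.e. an object of $\Rep(\Z) \simeq \on{Fun}(\Z, \Sp)$. The inverse equivalence sends a filtered spectrum $F\colon \Z \to \Sp$ to the graded module $\bigoplus_n F(n)$ equipped with the $\Sph[t]$-action induced by the transition maps.

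Finally, I would match the symmetric monoidal structures. The tensor product of $\GG_m$-equivariant quasi-coherent sheaves corresponds to the tensor product of $\Z$-graded $\Sph[t]$-modules, which at weight $n$ is a colimit of the $M_a \otimes N_b$ for $a+b=n$ balanced against the $\Sph[t]$-action; this is exactly the formula for Day convolution on $\on{Fun}(\Z, \Sp)$ with respect to addition on $\Z$. The main obstacle I anticipate is verifying this compatibility at the full $\infty$-categorical level, with all higher coherences intact, rather than merely on the underlying stable $\infty$-categories: one needs the descent equivalence and the graded-to-filtered translation to both be realized as symmetric monoidal functors, which typically requires invoking symmetric monoidal flat descent together with the identification of operadic left Kan extensions along $\Z \to \Sp$ with Day convolution.
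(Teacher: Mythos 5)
Your strategy is recognizably the same as the paper's, with one organizational difference: you propose to work directly with the \v{C}ech nerve of the atlas $\AAA^1 \to \AAA^1/\GG_m$ and pass to ``$\GG_m$-equivariant $\Sph[t]$-modules,'' whereas the paper factors through $B\GG_m$, first establishing $\QCoh(B\GG_m) \simeq \Rep(\Z^{ds})$ and then using the quasi-affine projection $\phi\colon \AAA^1/\GG_m \to B\GG_m$ to reduce both sides to categories of modules, $\Mod_{\phi_*\OO_{\AAA^1/\GG_m}}(\QCoh(B\GG_m))$ and $\Mod_{\Sph[t]}(\Rep(\Z^{ds}))$, which it then matches.

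Where your sketch glosses over the real content is the step ``the $\GG_m$-action endows each module with a weight decomposition $M = \bigoplus_n M_n$.'' Over a field this is linear reductivity of $\GG_m$, but over the sphere spectrum no such reasoning is available, and this identification of $\Sph[\Z]$-comodules with graded objects is precisely the nontrivial input to the whole theorem. In the paper it is Proposition~\ref{saywatup}: the left adjoint $\pi_!\colon \Rep(\Z^{ds}) \to \Sp$ is comonadic, so $\Rep(\Z^{ds}) \simeq \coMod_{\Sph[\Z]}$. Proving this requires checking the hypotheses of the Barr--Beck--Lurie theorem --- conservativity via a retract argument showing $\bigoplus_n X(n)\simeq 0 \Rightarrow X\simeq 0$, and preservation of $\pi_!$-split totalizations via the stability of split augmented cosimplicial objects under retracts (\cite[Corollary 4.7.2.13]{lurie2016}) --- and then feeding the result into \cite[Corollary 4.7.5.3]{lurie2016} to compute the totalization. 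You should not treat this as automatic. Likewise, your claim that multiplication by $t$ ``assembles into'' the filtration transition maps is the analogue of the paper's explicit comparison of $\Sph[\Z]$-comodule structures on $\phi_*\OO_{\AAA^1/\GG_m}$ and on $\Sph[t]$, both unwound to the suspension of $n\mapsto (n,n)\colon \mathbb{N}\to\mathbb{N}\times\Z$; that check is short but does need to be made. Modulo spelling out these two points, your route is sound and essentially coincides with the paper's.
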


Let us explain the notation on the right hand side of this equivalence. Given an $\infty$-category $\EuScript{C}$, one may define the $\infty$-category of \emph{filtered} objects of $\EuScript{C}$ as the functor category 
$$
\Rep(\Z; \EuScript{C}) := \on{Fun}((\Z, \leq)^{op}, \EuScript{C}).
$$ 
Here, $\Z$ is viewed as a partially ordered set and therefore a category with morphisms  $[n] \mapsto [m]$ when $n \leq m$. When $\EuScript{C}= \Sp$, the stable $\infty$-category of spectra, one obtains the notion of a filtered spectrum; we henceforth use the notation $\Rep(\Z)$ to denote the $\infty$-category of filtered spectra. This comes equipped with the "Day convolution" summetric monoidal structure: given filtered spectra $(A_n)_{n \in \Z}, (B_n)_{n \in \Z} $ their tensor product is the filtered spectrum which in degree $n$ is given by 
$$
\colim_{ n \leq i+j}  A_i \otimes_{\Sph} B_j
$$

\noindent We prove the following corollary: 

\begin{cor} \label{suhdude}
Let $\EuScript{X}$ be a spectral stack that admits a map $\EuScript{X} \to \AAA^1/ \GG_m$.  Then there is a natural filtration on the cohomology $\Gamma(X,\OO_\EuScript{X})$ of $X$, where $X = \Spec (\Sph) \times_{\AAA^1/ \GG_m} \EuScript{X}$ denotes the ``generic fiber" of $\EuScript{X}$. 
\end{cor}

The existence of a filtration on a spectrum is of computational value; as is well known (see e.g. \cite[Section 1.2.2]{lurie2016}) this gives rise to a spectral sequence computing its homotopy groups. Corollary \ref{suhdude} gives a method of producing natural filtrations on various spectra of algebro-geometric origin. For an example of this approach, see \cite{moulinos2019universal} where, for $X$ a derived scheme over the $p$-local integers, the authors exhibit the derived mapping stack $\Map(S^1, X)$ as the underlying object of a filtered stack; this induces the HKR filtration on $\on{HH}(X)$, the Hoschchild homology of $X$.

\vskip\baselineskip
\noindent{\bf Conventions and Notation.}
We work with $\infty$-categories throughout, following conventions in \cite{lurie2009, lurie2016}. We repeatedly use the notation $\Sph[X]$ to denote the suspension spectrum of a space $X$. We also occasionally make use of the notation $\Pre(\EuScript{C})$ and $\Pre_{\Sp}(\EuScript{C})$ to denote the $\infty$-category of presheaves and spectrum-valued presheaves on $\EuScript{C}$. We use $\on{CAlg}(\EuScript{C}):= \Alg_{E_\infty}(\EuScript{C}).$ to denote $E_{\infty}$-algebra objects in a symmetric monoidal $\infty$-category $\EuScript{C}^{\otimes}$.

\vskip\baselineskip
\noindent{\bf Acknowledgements.} I would like to warmly thank Marco Robalo and Markus Spitzweck for helpful conversations and ideas. I would also like to thank Bertrand To\"{e}n, from whom I first learned about the connection between $\AAA^1/\GG_m$ and filtrations. I began this project while in residence at the Mathematical Sciences Research Institute in Berkeley in Spring 2019, supported by the National Science Foundation under Grant No. DMS-1440140.  This work is supported by the grant NEDAG ERC-2016-ADG-741501.

%%%%%%%%%%%%%%%%%%%
\section{The geometric stack $\AAA^1 / \GG_m$}
\label{chen}
%%%%%%%%%%%%%%%%%%%
We recall the main players of the story in greater depth. In the setting of spectral algebraic geometry over $\Spec(\mathbb{S})$, and in fact over any $E_{\infty}$-ring $R$, there exist two notions of ``affine line". They are only equivalent rationally, i.e. over $\Spec (\mathbb{Q})$. Here we will focus our attention on the ``flat" affine line, which is defined as the spectral affine scheme $\Spec (\Sph[\mathbb{N}])$, where $\pi_{*}\Sph[\mathbb{N}] \cong (\pi_{*}\Sph)[t]$. This has the property that it pulls back, via the map $\Spec (\Z) \to \Spec (\Sph)$ to the ordinary (derived) affine line over the integers.

Similarly one denotes $\GG_{m}:= \Spec(\Sph[\Z])$; we refer to this as the flat multiplicative group scheme. The group-completion map $\mathbb{N} \to \Z$ of $E_{\infty}$-spaces induces the map of algebras $\Sph[\mathbb{N}] \to \Sph[\Z]$ and therefore a map 
$$
\GG_m \to \AAA^1 
$$
 of spectral schemes. Moreover, there is an induced action
$$
\GG_m \times \AAA^1 \to \AAA^1,
$$
which arises from the coaction 
$$
\Sph [\mathbb{N}] \to \Sph[\mathbb{N}] \otimes \Sph[\Z]
$$
of spectra; we define $\AAA^1 / \GG_m$ to be the quotient (in the $\infty$-category 
$$
\Shv_{fpqc}(\on{CAlg}^{\on{cn}}) \subset \on{Fun}(\on{CAlg}^{cn}, \mathcal{S})
$$
of fpqc sheaves on $\on{CAlg}^{\on{cn}}$, see  \cite[Appendix B]{lurie2016spectral}) with respect to this action. More precisely, one obtains a simplicial object $\mathcal{A}_\bullet$ given in degree $n$ by 
$$
\mathcal{A}_n = \AAA^1 \times \GG_m \times...\times \GG_m
$$
  with the face and degeneracy maps being the expected ones. Then
$\AAA^1 / \GG_m$ is simply defined to be the realization $||\mathcal{A}_\bullet||$ of this simplicial object. 

\begin{rem} \label{remarkgeometric}
The stack $\AAA^1/ \GG_m$ is not a spectral Deligne-Mumford stack; it is however,  \emph{geometric}. For the purposes of this note, we use the following characterization of geometric stacks from  \cite[Corollary 9.3.1.4]{lurie2016spectral}: a geometric stack $\EuScript{X}$ arises as the geometric realization $\EuScript{X}= |\EuScript{X}^{\bullet}|$ of a  simplicial object $\EuScript{X}_{\bullet}$ in the $\infty$-category $\Shv_{fpqc}(\on{CAlg}^{\on{cn}}) \subset  \on{Fun}(\on{CAlg}^{\on{cn}},\mathcal{S})$ of sheaves in the fpqc topology on $\on{CAlg}^{\on{cn}}$, where $\EuScript{X}_0$ is affine and $d_0 : \EuScript{X}_1 \to \EuScript{X}_0$ is a representable, affine, faithfully flat map. In this case, the map  $d_0: \AAA^1 \times \GG_m \to \AAA^1$ is  faithfully flat as it is the base change of the faithfully flat morphism of affine schemes  corresponding to $\Sph \to \Sph[\Z]$; here faithful flatness corresponds to the  classical condition that $\pi_0(-)$ applied to this map is faithfully  flat. The map in question will be  affine as any morphism of affine schemes is affine (cf. \cite[Proposition 2.4.4.3]{lurie2016spectral}). 
\end{rem}

The $\infty$-category of quasi-coherent sheaves $\QCoh(\EuScript{X})$ is particularly well behaved for a geometric stack $\EuScript{X}$ (such as the ones we will be dealing with). The following proposition collects several of the properties of $\QCoh(\EuScript{X})$ which we'll be making use of:

\begin{prop} (cf. \cite[Corollary 9.1.3.2]{lurie2016spectral}) \label{quasicoherentsheavesonstack}
Let $\EuScript{X}: \on{CAlg}^{\on{cn}} \to \mathcal{S}$ be a geometric stack. Then
\begin{enumerate}
    \item The $\infty$-category $\QCoh(\EuScript{X})$ is presentable
    \item There exists a symmetric monoidal product on $\QCoh(\EuScript{X})$ (this does not require geometricity)
    \item There exists a  left and right complete $t$-structure $(\QCoh(\EuScript{X})_{\geq 0},\QCoh(\EuScript{X})_{\leq 0})$ on $\QCoh(\EuScript{X})$, which is moreover compatible with the symmetric monoidal structure on $\QCoh(\EuScript{X})$.
\end{enumerate}
\end{prop}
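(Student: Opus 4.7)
The plan is to leverage the presentation $\EuScript{X} = |\EuScript{X}_\bullet|$ as the geometric realization of a simplicial affine object with faithfully flat face maps, and to identify
$$
\QCoh(\EuScript{X}) \simeq \lim_{[n] \in \Delta} \QCoh(\EuScript{X}_n),
$$
the limit being taken in $\Pr^L$ along the pullback functors. Since each $\EuScript{X}_n$ is (the Čech nerve of an) affine, one has $\QCoh(\EuScript{X}_n) \simeq \Mod_{R_n}$ for an appropriate $E_\infty$-ring $R_n$, which is presentable, symmetric monoidal, and equipped with its standard $t$-structure; the strategy is then to transfer each of these structures through the limit.

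For (1), each $\Mod_{R_n}$ lies in $\Pr^L$, and the pullback functors $f^*\colon \QCoh(\EuScript{X}_n) \to \QCoh(\EuScript{X}_m)$ preserve small colimits. Since $\Pr^L$ is closed under small limits (computed as limits in $\widehat{\Cat}_\infty$ along right adjoints, or dually as colimits in $\Pr^L$), the limit $\QCoh(\EuScript{X})$ is again presentable. For (2), each $\Mod_{R_n}$ carries the relative tensor product, making $\QCoh(\EuScript{X}_\bullet)$ a diagram in $\on{CAlg}(\Pr^L)$; the pullback functors $f^*$ are symmetric monoidal since $f^*(M \otimes_R N) \simeq f^*M \otimes_S f^*N$. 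Limits of $E_\infty$-algebra objects are computed underlying, so $\QCoh(\EuScript{X})$ inherits a symmetric monoidal structure for which pullback along each $\EuScript{X}_n \to \EuScript{X}$ is symmetric monoidal.

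For (3), the key step is to set
$$
\QCoh(\EuScript{X})_{\geq 0} := \lim_{[n]\in\Delta} \QCoh(\EuScript{X}_n)_{\geq 0}, \qquad \QCoh(\EuScript{X})_{\leq 0} := \lim_{[n]\in\Delta} \QCoh(\EuScript{X}_n)_{\leq 0},
$$
which makes sense because the face maps $\EuScript{X}_{n+1} \to \EuScript{X}_n$ are flat (they are base changes of the faithfully flat map $d_0$), hence the pullbacks are $t$-exact and restrict to the subcategories above. One checks that these subcategories determine a $t$-structure on $\QCoh(\EuScript{X})$ by verifying the mapping-space vanishing $\Map(M,N) \simeq *$ for $M \in \QCoh(\EuScript{X})_{\geq 1}$ and $N \in \QCoh(\EuScript{X})_{\leq 0}$ pointwise in the simplicial diagram, and by constructing truncation triangles levelwise and assembling them by taking the limit. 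Left/right completeness follows from the corresponding completeness of each $\Mod_{R_n}$, since the limit of left (resp.\ right) complete $t$-structures along $t$-exact functors is again left (resp.\ right) complete. Compatibility with the symmetric monoidal structure reduces to the statement that the tensor product of two connective objects is connective, which holds levelwise in the diagram.

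The main obstacle is the $t$-structure part (3): one has to verify that the pointwise truncations really do glue to truncations in the limit, which uses $t$-exactness of the transition functors (hence flatness of the Čech nerve, hence faithful flatness of $d_0$) in a crucial way. Once that is in place, left/right completeness and monoidal compatibility follow formally. A careful exposition is given in \cite[Section 9.1.3]{lurie2016spectral}, to which we refer for the detailed verification.
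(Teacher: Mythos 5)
Your proposal is correct and takes the same route as the paper, which does not actually prove this proposition but instead records, in the remark immediately following it, that these properties follow from presenting $\QCoh(\EuScript{X})$ as the totalization of the cosimplicial diagram of module categories over the \v{C}ech nerve of a faithfully flat affine atlas, referring to \cite[Section 9.1.3]{lurie2016spectral} for the details. Your sketch fills in exactly these details (closure of $\EuScript{P}r^L$ under limits for (1), the lift to $\on{CAlg}(\EuScript{P}r^L)$ for (2), and $t$-exactness of flat pullback for (3)).
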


\begin{rem} \label{atlasstuff}
These properties essentially follow from the fact that there exists a faithfully flat morphism $f: X_0 \to \EuScript{X}$  where $X_0= \Spec(A)$ for $A$ an $E_{\infty}$ ring. Let $X_{\bullet}$ denote the Cech nerve of $f$. Then $\QCoh(\EuScript{X})$ may be defined as the totalization of the cosimplicial diagram of presentable, stable $\infty$-categories $\QCoh(X_{\bullet})$. We remark further that by \cite[Remark 9.1.3.4]{lurie2016spectral}, the pullback $f^*: \QCoh(\EuScript{X}) \to \Mod_A$ will be t-exact and conservative. Hence, $\mathcal{F} \in \QCoh(\EuScript{X})_{\geq 0}$ if and only if $f^* (\mathcal{F}) \in (\Mod_A)_{\geq 0} $
\end{rem}

%%%%%%%%%%%%%%%%%%%
\section{Filtered and Graded Spectra}
\label{azumaya}
%%%%%%%%%%%%%%%%%%%

In this section we give a quick overview  of graded and filtered spectra. Much of this can be found in \cite{lurie2014rotation}. 

\begin{defn}
Let $\EuScript{C}$ be a $\infty$-category; let $\Z^{ds}$ denote the integers viewed as a discrete space;  we let $\on{Fun}(\Z^{ds}, \EuScript{C})$ be the $\infty$-category of graded objects of $\EuScript{C}$;
in particular we set
$$
\on{Rep}(\Z^{ds})= \on{Fun}(\Z^{ds}, \Sp)
$$
to be the stable $\infty$-category of graded spectra. 
\end{defn}

\begin{defn}
Let $\Z$ be the integers viewed as an $\infty$-category via its partial ordering, and let $\EuScript{C}$ denote an arbitrary $\infty$-category. Then the $\infty$-category of filtered objects of $\EuScript{C}$ is denoted by $\on{Fun}(\Z^{op}, \EuScript{C})$;
in particular we set 
$$
\on{Rep}(\Z):= \on{Fun}(\Z^{op}, \Sp)
$$
to be the $\infty$-category of filtered spectra. 
\end{defn}

An object in $\Rep(\Z) $ is given by a sequence of spectra $X_n$ with maps
$$
... \rightarrow X_n \rightarrow X_{n-1} \rightarrow .... 
$$
As there is an inclusion of simplicial sets $\Z^{ds} \into \Z$, there exists an induced resriction functor $\on{Res}: \Rep(\Z) \to \Rep(\Z^{ds})$, which forgets the structure maps of the filtration. This has a left adjoint, $I: \Rep(\Z^{ds}) \to  \Rep(\Z)$ which sends a functor $\Z^{ds} \to \Sp$ to its Kan extension along $\Z^{ds} \into \Z$. This functor is concretely given by 
$$
(IX)_n= \bigoplus_{ m \geq n} X_m
$$

\begin{rem}
Via the symmetric monoidal structures of $\Sp$, $\Z^{ds}$ and $\Z$ --the latter two arising from the standard abelian group structure on the integers-- one endows $\Rep(\Z^{ds})$ and $\Rep(\Z)$ with a symmetric monoidal structure arising from Day convolution. For more on the Day convolution product in the setting of $\infty$-categories, see \cite{glasman2016day}.  
\end{rem}

\begin{rem}
 $\Rep(\Z)$ is monadic over $\Rep(\Z^{ds})$, cf. \cite[Proposition 3.1.6]{lurie2014rotation}. Let $\Sph^f$ denote the unit filtered spectrum, its underlying graded spectrum $\Sph[t]:= \on{Res}(\Sph^f)$ is given by 

$$
\Sph[t]_n = 
\begin{cases}
\Sph & i \leq 0 \\
      0 & i > 0 
\end{cases}
$$
This will be an $E_{\infty}$-algebra as it is the image of the unit of $\Rep(\Z)$ under the lax symmetric monoidal functor $\on{Res}$. By  loc cit. there is a symmetric monoidal equivalence 
$$
\Rep(\Z) \simeq \Mod_{\Sph^f}(\Rep(\Z)) \simeq \Mod_{\Sph[t]}(\Rep(\Z^{ds})),
$$
and so $\Rep(\Z)$ may be identified with algebras over the monad $T(-) \simeq -\otimes_{} \Sph[t]$ on $\Rep(\Z^{ds})$. 

\end{rem}

\begin{const}
Fix $i$; for $(X_{n})$ a filtered object, we let $\on{gr}(X)_i = \on{cofib}(X_{i+1} \to X_{i})$. These assemble for all $i \in \Z$, to form a functor
$$
\on{gr}: \Rep(\Z) \to \Rep(\Z^{ds});
$$
this is commonly referred to as the \emph{associated graded} of the filtered spectrum $(X_n)$.
\end{const}

\begin{rem} \label{sven}
Let $\AAA$ denote the filtered spectrum which is $\Sph$ concentrated in weight $0$, and $0$ everywhere else. This is an $E_{\infty}$-algebra object in $\Rep(\Z)$. As it turns out, $\Rep(\Z^{ds})$ can itself be described as algebras over a certain monad on $\Rep(\Z)$, by way of the associated graded functor. Indeed, by \cite[Proposition 3.2.7]{lurie2014rotation} this induces a symmetric monoidal equivalence
$$
\Rep(\Z^{ds}) \simeq \Mod_{\AAA}(\Rep(\Z)).
$$
\end{rem}

%%%%%%%%%%%%%%%%%%%%%%%%%%%%%%%%%%%%%
\section{Quasi-coherent sheaves on $B \GG_m$}
\label{suhjew}
%%%%%%%%%%%%%%%%%%%%%%%%%%%%%%%%%%%%%
Here we begin a detailed study of the category of quasi-coherent sheaves on the spectral geometric stack $B \GG_m$. Again, $\GG_m := \Spec(\Sph[\Z])$ where $\Sph[\Z] := \Sigma_{+}^{\infty}\Z^{ds}.$ The bialgebra structure on $\Sph[\Z]$  allows for one to define,
in the $\infty$-topos $\Shv_{\on{fpqc}}(\on{CAlg}^{cn})$ of fpqc sheaves on the site corresponding to connective $E_\infty$-rings, the  group object $\mathcal{G}_{\bullet}$ (cf. \cite[Section 6.1.2]{lurie2009}
$$
\mathcal{G}_n= \GG_m^{\times n} 
$$ 
As a group object is in particular a simplicial object, we may define 
$$
B \GG_m := || \mathcal{G}_{\bullet}||
$$
as the realization of this simplicial object, i.e. as the \emph{classifying stack} of $\GG_m$. 

Our goal is to prove the following identification with graded spectra:
\begin{thm} \label{benchen}
There is an equivalence of stable symmetric monoidal $\infty$-categories:
$$\QCoh(B\GG_m) \simeq \on{Rep}(\Z^{ds})$$
\end{thm}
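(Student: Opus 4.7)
The plan is to combine faithfully flat descent along the atlas $\pi: \Spec(\Sph) \to B\GG_m$ with an explicit identification of comodules over the group coalgebra $\Sph[\Z]$. By the Remark following the proposition on geometric stacks in Section 2, we have $\QCoh(B\GG_m) \simeq \Tot\, \QCoh(X_\bullet)$, where $X_\bullet$ is the \v{C}ech nerve of $\pi$, with $X_n = \GG_m^n = \Spec(\Sph[\Z^n])$. The resulting cosimplicial diagram $\Mod_{\Sph[\Z^\bullet]}(\Sp)$ is the cobar construction for the Hopf coalgebra $\Sph[\Z]$, whose comultiplication is induced by the diagonal $\Z \to \Z \times \Z$, making each element $[n]$ of $\Sph[\Z] = \Sigma^\infty_+\Z \simeq \bigoplus_{n \in \Z}\Sph$ grouplike. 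Its totalization is by definition the $\infty$-category of comodules $\coMod_{\Sph[\Z]}(\Sp)$.

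The key step is then the identification $\coMod_{\Sph[\Z]}(\Sp) \simeq \Rep(\Z^{ds})$. I would construct a functor
$$
\Phi : \Rep(\Z^{ds}) = \on{Fun}(\Z^{ds}, \Sp) \longrightarrow \coMod_{\Sph[\Z]}(\Sp), \qquad (M_n)_{n \in \Z} \mapsto \bigoplus_n M_n,
$$
equipping the total spectrum with the coaction whose restriction to each summand $M_n$ is the inclusion $M_n \hookrightarrow M_n \otimes \Sph[\Z]$ induced by the grouplike element $[n]$. For the inverse, given a comodule $(M, \rho)$, I would use the direct sum decomposition $\Sph[\Z] \simeq \bigoplus_n \Sph$ to extract endomorphisms $\rho_n : M \to M$ which, by coassociativity and counitality applied to the grouplike comultiplication, form a family of orthogonal idempotents summing to the identity; idempotent-completeness of $\Sp$ then splits $M$ into the resulting graded pieces.

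Finally, the symmetric monoidal structures will match because both tensor products are induced by the abelian group law $\Z \times \Z \to \Z$: Day convolution on the left, and the Hopf-algebra tensor of comodules on the right. The main obstacle I anticipate is the grouplike decomposition in the middle step—while the orthogonal-idempotent argument is classical for discrete coalgebras, its $\infty$-categorical analogue requires care in handling the infinite direct sum $\bigoplus_n \Sph$ and the coherent sense in which a coaction determines a direct-sum decomposition of $M$. A cleaner alternative would be to match the cosimplicial diagram $\Mod_{\Sph[\Z^\bullet]}(\Sp)$ directly with an analogous cobar presentation of $\Rep(\Z^{ds})$, sidestepping the explicit comodule language entirely.
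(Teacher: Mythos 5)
Your first half — descending $\QCoh(B\GG_m)$ along the atlas $\Spec(\Sph) \to B\GG_m$, recognizing $\Tot\,\Mod_{\Sph[\Z^\bullet]}(\Sp)$ as a cobar construction, and identifying the target as $\coMod_{\Sph[\Z]}(\Sp)$ — is the same as the paper's setup. The two arguments then diverge on the key comparison $\coMod_{\Sph[\Z]}(\Sp) \simeq \Rep(\Z^{ds})$. The paper proves this by showing the left adjoint $\pi_! \colon \Rep(\Z^{ds}) \to \Sp$, $(M_n)_n \mapsto \bigoplus_n M_n$, is \emph{comonadic} via the Barr-Beck--Lurie theorem: conservativity is checked by observing that each $M_n$ is a retract of $\bigoplus_n M_n$, and $\pi_!$-split cosimplicial objects are shown to split using the product decomposition $\Rep(\Z^{ds}) \simeq \prod_{i\in\Z}\Sp$ together with the closure of split augmented cosimplicial objects under retracts (\cite[Corollary 4.7.2.13]{lurie2016}); then \cite[Corollary 4.7.5.3]{lurie2016} glues this to the descent diagram using right-adjointability of the coface squares (a consequence of geometricity). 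You instead propose to build the inverse of $\pi_!$ by hand, extracting from a coaction $\rho\colon M \to M\otimes\Sph[\Z] \simeq \bigoplus_n M$ a family of endomorphisms $\rho_n$ and invoking idempotent splitting.

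The idempotent route has a genuine gap that you correctly anticipate. Idempotent-completeness of $\Sp$ lets you split a \emph{single} idempotent; it does not, by itself, let you split a countably infinite family of pairwise orthogonal idempotents $\rho_n$ into a direct sum decomposition $M \simeq \bigoplus_n M_n$. The required statement ``$\sum_n \rho_n = \id_M$'' is not literally a sum of infinitely many endomorphisms but rather the counit equation $(\id_M\otimes\epsilon)\circ\rho \simeq \id_M$, which only expresses $M$ as a retract of $\bigoplus_n M$. Promoting that retract to a canonical $\Z$-graded decomposition, coherently and functorially in $M$, is exactly the content one would have to establish, and doing so is about as much work as verifying the hypotheses of Barr-Beck--Lurie in the first place. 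Your final sentence — matching the cobar diagram for $\Sph[\Z]$ against a cobar presentation of $\Rep(\Z^{ds})$ directly — is essentially the paper's strategy: presenting $\Rep(\Z^{ds})$ via the comonadic adjunction $\pi_! \dashv \pi^*$ \emph{is} the ``cobar presentation'' of the graded category, and checking comonadicity of $\pi_!$ replaces the explicit inverse construction entirely.
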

A similar result, over $\Spec(\Z)$, appears in \cite[Section 10]{spitzweck2010derived}. 
We first describe the functor which realizes the above equivalence. Let $\pi: \Z^{ds} \to pt$ be the terminal morphism in $\EuScript{S}$, the $\infty$-category of spaces. As described in \cite{umkehr} there is an induced functor $\pi^{*}: \Sp \to \Pre_{Sp}(\Z)= \Rep(\Z^{ds})$. This functor sends a spectrum $E$ to the ``constant presheaf", namely to the functor $F: \Z^{ds} \to \Sp $ sending all $n\in \Z$ to $E$.  

Furthermore $\pi^*$ has the usual global sections right adjoint $\pi_{*}$, which sends a given $X(n) \in \Rep(\Z^{ds})$ to 
$$
\pi_*(X(n))= \prod_{n\in \Z}X(n).
$$
In addition it comes equipped with a left adjoint functor $\pi_{!}: \Pre_{Sp}(\Z^{ds}) \to Sp$ sending  $X(n) \in \Rep(\Z^{ds})$ to 
$$
\pi_!(X(n))= \bigoplus_{n\in \Z}X(n).
$$
This is op-lax monoidal with respect to the pointwise symmetric monoidal structure on $\Pre_{Sp}(\Z^{ds})$ but it is symmetric monoidal with respect to the Day convolution product. Indeed, by e.g.  \cite[Proposition 6.12]{umkehr}, Day convolution endows the  functor
$$
\Pre_{Sp}(-): \mathcal{S} \to  \EuScript{P}r_{L, st},
$$
given by 
$$
X \mapsto \Pre_{Sp}(X), \, \, \, \,  \, \,  f \mapsto f_!
$$ 
with a symmetric monoidal structure. Thus, in this particular case, $f = \pi: \Z \to pt.$ is sent to map of  $E_\infty$ algebra objects in $ \EuScript{P}r_{L, st}$, i.e a symmetric monoidal functor.

\begin{prop} \label{saywatup}
The  functor $\pi_{!}$ is comonadic; in particular there is an induced equivalence 
$$
\on{Rep}(\Z^{ds}) \simeq \coMod_{\Sph[\Z]}
$$
\end{prop}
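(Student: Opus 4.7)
The approach is to apply the comonadic form of the Barr--Beck--Lurie theorem (\cite[Section 4.7.3]{lurie2016}, read in the opposite $\infty$-categories) to the adjunction $\pi_! \dashv \pi^*$. This reduces the proposition to two conditions: that $\pi_!$ is conservative, and that $\Pre_{\Sp}(\Z)$ admits, and $\pi_!$ preserves, totalizations of $\pi_!$-split cosimplicial objects.

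Conservativity is immediate: since $\Z$ is taken here as a discrete $\infty$-groupoid, left Kan extension along $\pi$ is just the colimit over a discrete diagram, which in the stable setting becomes a direct sum; concretely, $\pi_!\bigl((X_n)_{n \in \Z}\bigr) \simeq \bigoplus_{n \in \Z} X_n$. Since homotopy groups of a spectrum commute with direct sums, $\pi_! X \simeq 0$ forces $X_n \simeq 0$ for every $n$. For the second hypothesis, $\Pre_{\Sp}(\Z) \simeq \prod_{n \in \Z} \Sp$ is presentable stable and hence complete; moreover, $\pi_!$-split cosimplicial objects have absolute totalizations which are preserved by every functor whatsoever, so the preservation condition holds automatically.

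Having verified the Barr--Beck hypotheses, one obtains an equivalence $\Pre_{\Sp}(\Z) \simeq \coMod_T(\Sp)$ with $T = \pi_! \pi^*$. To identify $T$ with the comonad $-\otimes \Sph[\Z]$ equipped with its canonical coalgebra structure, note that $\pi^* E$ is the constant presheaf with value $E$, so $T(E) \simeq \bigoplus_{\Z} E \simeq \Sph[\Z] \otimes E$. The comonad structure on $T$ corresponds under this identification to a coalgebra structure on $\Sph[\Z]$; by naturality of the unit of the adjunction, together with the symmetric monoidality of $\Sigma^\infty_+$, this structure unfolds to the coalgebra induced by the diagonal $\Z \to \Z \times \Z$, which is the standard coalgebra structure on $\Sph[\Z] = \Sigma^\infty_+\Z$. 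This yields $\Pre_{\Sp}(\Z) \simeq \coMod_{\Sph[\Z]}(\Sp)$.

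The Barr--Beck conditions are essentially formal in this simple setting; the main (modest) obstacle I anticipate is the explicit identification of the comonad $T$ with the coalgebra $\Sph[\Z]$, which requires unwinding the unit of $\pi_! \dashv \pi^*$ through the computation above to ensure the comultiplication coincides with $\Sigma^\infty_+$ applied to the diagonal of $\Z$.
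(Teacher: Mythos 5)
Your overall route — apply the comonadic Barr--Beck--Lurie theorem to the adjunction $\pi_! \dashv \pi^*$ — is the same as the paper's, and your verification of conservativity (via $\pi_!X \simeq \bigoplus_{n} X_n$ together with the fact that homotopy groups commute with direct sums) is correct and essentially equivalent to the paper's retract argument.

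The gap is in the second Barr--Beck hypothesis. You write that ``$\pi_!$-split cosimplicial objects have absolute totalizations which are preserved by every functor whatsoever, so the preservation condition holds automatically.'' This conflates ``$\pi_!$-split'' with ``split.'' A $\pi_!$-split cosimplicial object $X^\bullet$ in $\Rep(\Z^{ds})$ is one that becomes split only \emph{after} applying $\pi_!$; it is not a priori split in $\Rep(\Z^{ds})$ itself, and hence its totalization is not a priori absolute. Note also that $\pi_! \simeq \bigoplus_{\Z}$ is a left adjoint given by an infinite coproduct, and infinite coproducts in a stable $\infty$-category do not in general commute with totalizations, so there is no general reason for $\pi_!$ to preserve the limit of $X^\bullet$. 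Verifying exactly this is the substantive content of the second hypothesis. The paper addresses it by using the product decomposition $\Rep(\Z^{ds}) \simeq \prod_{i \in \Z} \Sp$ to write $X^\bullet$ coordinatewise as $(X^\bullet(i))_{i}$, observing that each $X^\bullet(i)$ sits as a retract of $\pi_! X^\bullet$ in $\on{Fun}(\Delta, \Sp)$, and then invoking \cite[Corollary 4.7.2.13]{lurie2016} --- the full subcategory of augmented cosimplicial objects which extend to split ones is closed under retracts --- to conclude that each $X^\bullet(i)$, and therefore $X^\bullet$ itself, splits in the source category. An argument of this kind is needed before the Barr--Beck theorem can be applied; your remaining computation identifying the comonad $\pi_!\pi^*$ with $-\otimes \Sph[\Z]$ is fine.
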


\begin{rem}
The argument furnished here applies more generally; for $M$ an arbitrary abelian group, there will be an equivalence 
$$
\coMod_{\Sph[M]} \simeq \Rep(M^{ds}),
$$
between comodules over $\Sph[M]$, and local systems of spectra on the abelian group $M$ viewed as a discrete space.  In fact this gives a nice conceptual interpretation of the symmetric monoidal structure on comodules over the Hopf algebra $\Sph[M]$; it is the one induced via this equivalence, by the Day convolution symmetric monoidal structure on $\Rep(M^{ds})$. 
\end{rem}

The proof of this proposition boils down to an application of the following comonadic form of the Barr-Beck theorem which we reproduce here for the reader's convenience. 

\begin{thm}[Barr-Beck-Lurie] \label{barrbeck}
The adjunction $F: C \rightleftarrows D: G$ is comonadic if and only if the following hold
\begin{enumerate}
    \item The functor $F$ is conservative.
    \item Given an $F$-split cosimplicial object $X^{\bullet}$ in $C$, the diagram $X^{\bullet}$  admits a limit which will be preserved by $F$.
\end{enumerate}
\end{thm}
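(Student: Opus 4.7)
The plan is to follow the classical Barr--Beck strategy, adapted to the $\infty$-categorical setting. Form the canonical comparison functor $\phi: C \to \coMod_{FG}(D)$, which sends an object $c \in C$ to $Fc$ equipped with its tautological $FG$-comodule structure (coming from the unit of the adjunction). This factors $F$ as $U \circ \phi$, where $U: \coMod_{FG}(D) \to D$ is the forgetful functor. Comonadicity by definition is the assertion that $\phi$ is an equivalence, so the theorem amounts to giving a criterion for this.

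The ``only if'' direction is essentially formal. Assume $\phi$ is an equivalence. Then $F = U \circ \phi$, and $U$ is conservative because a map of comodules is an equivalence iff its image in $D$ is; hence $F$ is conservative. For condition (2), any $F$-split cosimplicial object $X^\bullet$ in $C$ transports under $\phi$ to a cosimplicial object in $\coMod_{FG}(D)$ whose underlying cosimplicial object in $D$ is split. Split cosimplicial diagrams admit limits preserved by every functor, and the forgetful functor $U$ creates these limits; transporting back via $\phi^{-1}$ gives the required limit of $X^\bullet$ in $C$ and its preservation by $F$.

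For the ``if'' direction, the idea is to construct a quasi-inverse to $\phi$ by recovering every comodule as a totalization of its cobar resolution. Given a comodule $(d,\rho) \in \coMod_{FG}(D)$, one forms the standard cosimplicial object $G d \to GFG d \to GFGFG d \to \cdots$ in $C$. Applying $F$ to this cosimplicial object produces a split cosimplicial diagram in $D$ (this is the tautological splitting of the bar resolution of any comodule), so condition (2) guarantees a limit $\Tot \in C$ whose image under $F$ recovers $d$. Sending $(d,\rho) \mapsto \Tot$ yields a candidate inverse $\psi: \coMod_{FG}(D) \to C$; the natural transformations $\psi\phi \to \id_C$ and $\phi\psi \to \id_{\coMod_{FG}(D)}$ are then built from the counit/unit, and both become equivalences after applying $F$ (which is conservative by condition (1)).

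The main obstacle is the coherence bookkeeping for cobar resolutions in the $\infty$-categorical setting: one must make precise that every comodule \emph{is} the totalization of its cobar resolution as a cosimplicial diagram in $\coMod_{FG}(D)$, and that the totalization commutes appropriately with $\phi$ and $\psi$. In the $1$-categorical case this is a direct diagram chase, but in the $\infty$-categorical case it requires a careful handling of augmented simplicial/cosimplicial objects and their splittings; this is precisely what Lurie carries out in the relevant sections of \cite{lurie2016}, and we content ourselves with quoting the result.
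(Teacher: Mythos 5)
The paper itself offers no proof of this statement; it is recorded verbatim as Lurie's form of the Barr--Beck theorem (``reproduced here for the reader's convenience'') and then invoked in the proof of Proposition \ref{saywatup}. Your sketch follows the standard Beck strategy---comparison functor $\phi: C \to \coMod_{FG}(D)$, conservativity plus creation of limits of split cosimplicial objects for necessity, cobar resolutions and conservativity for sufficiency---which is essentially how the result is established in \cite[Section 4.7.3--4.7.4]{lurie2016}, and you are right that the coherence bookkeeping is exactly what Lurie supplies there.

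One discrepancy merits a flag. Your ``only if'' direction correctly establishes the standard necessary condition: an $F$-split cosimplicial object $X^\bullet$ admits a limit in $C$ preserved by $F$, because the forgetful functor $U: \coMod_{FG}(D) \to D$ creates limits of $U$-split cosimplicial diagrams. It does not---and should not---prove the stronger clause in the paper's condition (2), namely that ``the diagram $X^\bullet$ splits in $C$.'' Comonadicity does not imply this: an $F$-split cosimplicial object corresponds (under $\phi$) to a cosimplicial comodule whose underlying diagram in $D$ is split, but the extra codegeneracy witnessing that splitting need not be a comodule map, so the diagram need not split in $C \simeq \coMod_{FG}(D)$ itself. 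As literally phrased, only the sufficiency half of the paper's ``if and only if'' is correct. This is harmless for the paper's application, since the proof of Proposition \ref{saywatup} verifies the stronger splitting condition directly, which is more than enough to conclude comonadicity---but your argument is the one aligned with the theorem as it actually holds.
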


\begin{proof}[Proof of Proposition \ref{saywatup}]
Since $\pi_{!}$ is op-lax monoidal with respect to the pointwise symmetric monoidal structure on $\Pre_{Sp}(\Z^{ds})$, it sends comodules over the constant spectrum (the unit w.r.t this symmetric monoidal structure) to comodules over $\pi_{!}(\mathbf{1}) \simeq \Sph[\Z]$. Hence, $\pi_{!}: \Pre_{Sp}(\Z^{ds}) \to Sp$ factors through the forgetful functor $F: \on{coMod}_{\Sph[\Z]} \to Sp$.
We first show that $\pi_{!}$ is conservative. As we are dealing with a functor of stable $\infty$-categories, it is enough to show that if $\pi_{!}(X)= \oplus_{n \in \Z} X(n) \simeq 0$, then $X \simeq 0$. Note that every $X(n)$ will be a retract of the direct sum; indeed the following composition gives the identity:
$$
X(i) \to \bigoplus_{n \in \Z} X(n) \to \prod_{n \in \Z} X(n) \to X(i).
$$
Now if  $\oplus_{n \in \Z} X(n) \simeq 0$, this would then imply that every $X(i) \simeq 0$, which allows us to conclude that the graded spectrum $X \simeq 0$.

Next, we show that every $\pi_{!}$-split cosimplicial object in $\Rep(\Z^{ds})$ splits and, so, becomes a limit diagram. Let $X^{\bullet}$ be such an object. As an object in $\EuScript{P}r_{L,st}$, the $\infty$-category of presentable stable $\infty$-categories, there is an equivalence  
\begin{equation}\label{repz}
\Rep(\Z^{ds}) \simeq \prod_{i \in \Z} \Sp; 
\end{equation}
hence specifying the cosimplicial object $X^{\bullet}$ is equivalent to specifying a cosimplicial object $X^{\bullet}(i): \Delta \to Sp$ for each integer value $i$. It will be enough to show that each $X^{\bullet}(i)$ splits. We remark further that each $X^{\bullet}(i)$ will be a retract of $X^{\bullet}$; composing with $\pi_{!}$, one obtains each $\pi_{!}X^{\bullet}(i)$ as a retract of $\pi_{!} X^{\bullet}$ in the $\infty$-category of split cosimplicial diagrams $\on{Fun}(\Delta_{- \infty}, \Sp)$. Note that each $\pi_{!}X^{\bullet}(i)$ is canonically (co)augmented by the limit. We now appeal to  \cite[Corollary 4.7.2.13]{lurie2016} which states that for an $\infty$-category $\EuScript{C}$, the subcategory $\EuScript{X}$ of augmented cosimplicial objects in $\on{Fun}(\Delta, \EuScript{C})$ that extend to split augmented cosimplicial objects is closed under taking retracts in the $\infty$-category $\on{Fun}(\Delta, \EuScript{C})$; moreover they remain retracts in the $\infty$-category of split augmented cosimplicial objects.  Hence each $\pi_{!}X^{\bullet}(i)$ splits as a cosimplicial object in $\Sp$ and is a limit diagram. Of course, as this is true for all $i$, we can take the product  to obtain a split augmented cosimplicial object $\tilde{X}^{\bullet} \in \on{Fun}(\Delta_{- \infty}, \Rep(\Z^{ds}))$ extending $X^{\bullet}$. 
It follows that $\pi_{!}$ is comonadic, and so we may conclude that 
\[
\Rep(\Z^{ds}) \simeq \coMod_{\pi_{!} \pi^{*}} \Sp \simeq  \on{coMod}_{\Sph[\Z]}.
\]
\end{proof}

\begin{proof} [Proof of Theorem \ref{benchen}]
We now prove the main theorem of this section. Recall that $B\GG_m$ is \emph{geometric}; as such it has an atlas, in this case given by the faithfully flat map $e: Spec(\Sph) \to B\GG_m$, and we can define $\QCoh(B \GG_m)$ as the totalization of the following cosimplicial diagram of $\infty$-categories:
$$
\Sp \rightrightarrows \Mod_{\Sph[\Z]} \substack{\rightarrow\\[-1em] \rightarrow \\[-1em] \rightarrow} \Mod_{\Sph[\Z] \otimes \Sph[\Z]} ...
$$

\noindent Note that this cosimplicial diagram may be coherently augmented  via the functor $\pi_{!}: \Rep(\Z^{ds}) \to \Sp$. More precisely, we obtain, in each cosimplicial degree $n$, the assignment 
$$
X \mapsto (\pi_! \pi^*)^{\circ n} \pi_!(X) \simeq  \Sph[\Z]^{n} \otimes_{\Sph} \pi_! (X)
$$
One sees by the standard tensor product associator formulas  that this agrees with $\pi_!$ composed with the cosimplicial structure maps. Thus we have produced an augmented cosimplicial  $\infty$-category.
By our above arguments this satisfies the conditions of the following comonadic version of  (\cite[Corollary 4.7.5.3]{lurie2016}):

\begin{prop} \label{barbecklimit}
Let $\EuScript{C}^{\bullet}: N(\Delta_+) \to Cat_{\infty}$ be an augmented cosimplicial  $\infty$-category and set $\EuScript{C}^{-1} = \EuScript{C}.$ Let $G: \EuScript{C} \to  \EuScript{C}^0$ be the canonical augmentation functor. Assume that
\begin{enumerate}
    \item $G$ is conservative
    \item The $\infty$-category  $\EuScript{C}^{-1}$ admits totalizations of $G$-split cosimplicial objects, and those totalizations are preserved by $G$. 
    \item For every morphism $\alpha: [m] \to [n]$ in $\Delta_{+}$, the diagram 
    $$
\xymatrix{
& \EuScript{C}^m \ar[d] \ar[r]^{d^0} &  \EuScript{C}^{m+1} \ar[d]\\
 & \EuScript{C}^n  \ar[r]^{d^0}& \EuScript{C}^{n+1}
}
$$
is right adjointable.
Then, the canonical map $\theta: \EuScript{C} \to \lim_{n\ \in \Delta} \EuScript{C}^n$ is an equivalence. 
\end{enumerate}
\end{prop}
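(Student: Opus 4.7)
The plan is to identify both $\EuScript{C}$ and $\lim_{n \in \Delta} \EuScript{C}^n$ with the $\infty$-category of comodules over a single comonad on $\EuScript{C}^0$, and thereby conclude that the comparison $\theta$ is an equivalence.

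I would begin by specializing the right adjointability of condition $(2)$ to the augmentation morphism $[-1]\to[0]$ in $\Delta_+$, which produces a right adjoint $G^R: \EuScript{C}^0 \to \EuScript{C}$ of $G$. Applied to general $\alpha: [m]\to[n]$, the same condition equips every transition functor $\alpha_*:\EuScript{C}^m\to \EuScript{C}^n$ with a right adjoint $\alpha^R$, and the Beck-Chevalley identities ensure that these right adjoints interact coherently with the coface operators $d^0$.

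With $G^R$ in hand, condition $(1)$ is exactly the hypothesis of the Barr-Beck-Lurie theorem (in the form stated earlier) applied to the adjunction $G\dashv G^R$. Applying it yields comonadicity of $G$ and an equivalence
\[
\EuScript{C} \;\simeq\; \on{coMod}_{T}(\EuScript{C}^0),
\]
where $T := G\, G^R$ is the comonad on $\EuScript{C}^0$ induced by this adjunction.

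The final and most delicate step is to identify $\lim_{n \in \Delta}\EuScript{C}^n$ with the same comodule $\infty$-category. The Beck-Chevalley hypothesis in $(2)$ is precisely what permits the passage to right adjoints to be coherent in $\on{Cat}_\infty$: replacing each transition functor $\alpha_*$ by $\alpha^R$, one should exhibit the restriction $\EuScript{C}^\bullet|_{\Delta}$ as the standard cobar cosimplicial object associated to the comonad $T$, whose totalization computes $\on{coMod}_T(\EuScript{C}^0)$. Composed with the comonadicity equivalence above, this produces the desired equivalence $\theta$. The principal obstacle is organizing the pointwise right adjoints into a coherent equivalence of cosimplicial $\infty$-categories matching the cobar of $T$; this is the content for which Lurie develops the right-adjointability machinery of \cite[Section 4.7]{lurie2016}, and in practice one proves the proposition by reducing to those general results rather than by constructing the comparison by hand.
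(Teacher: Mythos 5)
The paper does not supply its own proof of this proposition---it is reproduced verbatim as \cite[Corollary~4.7.5.3]{lurie2016} and invoked as a black box to conclude Theorem~\ref{benchen}---so there is no in-paper argument to compare against. Your sketch is a correct summary of the strategy Lurie actually uses: specialize the right-adjointability in (2) to $\alpha\colon[-1]\to[0]$ to extract a right adjoint $G^R$ of $G$ (it is indeed supplied by condition~(2), not~(1)), feed condition~(1) into the comonadic Barr--Beck--Lurie theorem to get $\EuScript{C}\simeq\on{coMod}_{GG^R}(\EuScript{C}^0)$, and then use the full Beck--Chevalley package in~(2) to identify $\lim_{\Delta}\EuScript{C}^\bullet$ with the same comodule category. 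One precision worth flagging in your third step: Lurie does not show that the restricted cosimplicial object $\EuScript{C}^\bullet|_{\Delta}$ is \emph{equivalent} to the cobar (or bar) cosimplicial $\infty$-category of the comonad $T=GG^R$---in general it is not---but rather shows directly, via the Beck--Chevalley data, that the projection $\lim_{\Delta}\EuScript{C}^\bullet\to\EuScript{C}^0$ is itself comonadic with comonad equivalent to $T$; the identification with $\EuScript{C}$ then follows from essential uniqueness of comodule categories. Since you defer the technical content to Lurie's machinery rather than carrying it out, this imprecision is harmless, and your outline is a faithful account of how the cited result is proved.
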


Indeed,  $G= \pi_!$ is comonadic and moreover, all the coface maps are right adjointable (this is a consequence of the geometricity of $B \GG_m$), so that by the proposition,  there is an equivalence 
\begin{equation}\label{relevantequivalence}
\Rep(\Z) \simeq \lim \Mod_{\Sph[\Z_{+}]^{\bullet} } \simeq   \QCoh(B \GG_m).   
\end{equation}
Furthermore, we claim that this is an equivalence of symmetric monoidal $\infty$-categories, that is, it may be promoted to an equivalence in the $\infty$-category  $\on{CAlg}(\EuScript{P}r^{L, st})$.  To see this, note that each of the structure maps of the cosimplicial diagram is given by pullback functors $f^*: \QCoh(X) \to \QCoh(Y)$ at the level of quasi-coherent sheaves which are symmetric monoidal. This makes $\EuScript{C}^\bullet= \Mod_{\Sph[\Z]}$ into a limit diagram in $\on{CAlg}(\EuScript{P}r^{L, st})$  We may now apply the fact that limits in $\on{CAlg}(\EuScript{P}r^{L, st})$  commute with the forgetful functor and so may be computed in the underying category $\EuScript{P}r^{L, st}$. Putting all this together, we conclude that  the equivalence (\ref{relevantequivalence}) is an equivalence in $\on{CAlg}(\EuScript{P}r^{L, st})$.
\end{proof}

\begin{rem}
We remark that Proposition \ref{barbecklimit} is stated in a slightly different form in \cite{lurie2016}, and thus the proof of the corresponding result must be modified to obtain Proposition \ref{barbecklimit}. Indeed, the proposition in loc cit. utilizes the \emph{monadic} form of Barr-Beck to conclude that the map  $G: \EuScript{C}^{-1} \to \EuScript{C}^0$ admits a \emph{left} adjoint $F$ such  $\emph{C}^{-1}$ is equivalent to left modules $\on{Mod}_T(\EuScript{C})$, over the monad $T= G \circ F$.  Here instead, $F$ will be obtained as a \emph{right} adjoint of $G$ and the assumption that totalizations of $G$-split cosimplicial objects are preserved by $G$ allows for one to apply the comonadic form of the Barr-Beck theorem (Theorem \ref{barrbeck}) to identify $\lim_{n \in \Delta} \EuScript{C}^n$ with \emph{comodules} $\on{coMod}_T(\EuScript{C}^0)$ of the corresponding comonad $T = G \circ F$  on $\EuScript{C}^0$. 
\end{rem}

%%%%%%%%%%%%%%%%%%%
\section{Proof of main theorem}
\label{proofsection}
%%%%%%%%%%%%%%%%%%%
We now conclude the proof that $\QCoh( \mathbb{A}^1 / \GG_{m}) \simeq \Rep(\Z)$. We first describe the strategy. Recall from \cite[Proposition 3.1.6]{lurie2014rotation} that the $\infty$-category of filtered spectra is monadic over graded spectra. Indeed, there exists an $E_{\infty}$-algebra $\Sph[t] \in \Rep(\Z^{ds})$ for which 
$$
\Mod_{\Sph[t]}(\Rep(\Z^{ds})) \simeq \Rep(\Z).
$$
Hence, the functor $\Rep(\Z) \to \Rep(\Z^{ds})$ 
Similarly, $\QCoh( \AAA^1 / \GG_m)$ is monadic over  $\QCoh(B \GG_m)$. The pushforward functor $\phi_*: \QCoh (\AAA^1 / \GG_m) \to \QCoh(B \GG_m)$ induces a symmetric monoidal equivalence
$$
\QCoh(\AAA^1 / \GG_m) \simeq \Mod_{\phi_* \OO_{\AAA^1 / \GG_m}}(\QCoh(B \GG_m))
$$
so that $ \QCoh (\AAA^1 / \GG_m)$ is identified with algebras over the monad $- \otimes \phi_*(\OO_{\AAA^1 / \GG_m})$ on $\QCoh(B \GG_m)$.

Note that this is not true for an arbitrary map of stacks; in this case it follows (cf. \cite[Proposition 3.2.5]{lurietannakadag}) from the fact that $\phi: \AAA^1 / \GG_m \to B \GG_m$ is a quasi-affine representable map. Recall that morphism $f: \EuScript{X} \to \EuScript{Y}$ spectral stacks is quasi-affine if for any quasi-affine spectral scheme $Y \to \EuScript{Y}$, the fiber product $Y \times_{\EuScript{Y}} \EuScript{X}$ is quasi-affine. By Remark 3.1.29 of loc. cit., this property holds for $f : \EuScript{X} \to \EuScript{Y} $ with $\EuScript{Y}\simeq \colim \EuScript{Y}_i$ if it holds for each pullback $f_i: \EuScript{X} \times_{\EuScript{Y}} \EuScript{Y}_i$. In our case with $\EuScript{Y}= B \GG_m$, the pullback  of the map $\phi$ in each degree is the projection map 
$$
\AAA^1 \times \GG_m^{\times n} \to \GG_m^{\times n} 
$$
which is an affine map.

In order to furnish the desired equivalence, it will be enough to identify $\phi_{*} \OO_{\AAA^1 / \GG_m}$ with \\ $\on{Res}(\Sph^f) \simeq \Sph[t]$ as graded $E_{\infty}$-algebras, equivalently by Proposition \ref{saywatup}, as comodules over $\Sph[\Z]$. 

We recall from Remark \ref{remarkgeometric} that the quotient map $p' : \mathbb{A}^1 \to  \AAA^1/ \GG_{m}$ is an atlas for $\AAA^1/ \GG_{m}$, making it into a geometric stack. In particular, it is a flat and surjective map of spectral stacks. Surjectivity is clear, and flatness can be checked by pulling back (as we did for the quasi-affine map) along each map $\mathcal{A}_i \to \AAA^1 / \GG_m$ in the simplicial diagram. Moreover, we have the following cartesian square of stacks  where the horizontal maps are atlases 
$$
\xymatrix{
&\AAA^1 \ar[d]_{\phi'} \ar[r]^{p'} &  \AAA^1/ \GG_m\ar[d]^{\phi}\\
 & Spec(\Sph)   \ar[r]^{p}& B \GG_m
}
$$
The induced
diagram on $\infty$-categories of quasi-coherent sheaves

$$
\xymatrix{
&\QCoh(B \GG_m) \ar[d] \ar[r] &  \Sp \ar[d]\\
 & \QCoh(\AAA^1/\GG_m)  \ar[r]& \QCoh(\AAA^1)
}
$$
is right-adjointable, by \cite[Proposition 6.3.4.1]{lurie2016spectral} so that the following Beck-Chevalley transformation
$$
p^* \phi_* \to \phi'_* p'^*
$$
is an equivalence of functors $\QCoh(\AAA^1/\GG_m) \to \Sp$. Of course
$$
\phi'_* p'^* (\OO_{\AAA^1/\GG_m}) \simeq \phi'_{*}\OO_{\AAA^1} \simeq \Sph[\mathbb{N}];
$$
where  $\Sph[\mathbb{N}] \simeq \Sigma^{\infty}_+ \mathbb{N}$, the $E_{\infty}$-algebra satisfying $\pi_{*} \Sph[\mathbb{N}] \simeq (\pi_{*}\Sph)[t]$ on homotopy.

It is clear therefore that the underlying spectrum of $\phi_{*}   \OO_{\AAA^1/ \GG_m}$ agrees with the underlying spectrum of the graded spectrum $\Sph[t]$. Hence it will be enough to  show that the two  induced $\Sph[\Z]$-comodule structures on the spectrum $\Sph[\mathbb{N}]$  are equivalent. 
We first analyze the comodule structure on $\pi_!(\Sph[t])$. Let $\iota: \mathbb{N} \to \Z$ be the standard monoidal inclusion of the natural numbers into the integers. Via our previous identification of graded spectra with $\Sph[\Z]$-comodules, we see that that the canonical $\Sph[\Z]$-comodule structure on $\pi_!(\Sph[t])$ is induced by this map of spaces. Indeed, $\Sph[t]$ is the graded spectrum associated to the paramatrized space $\iota \in \mathcal{S}_{/\Z}$  via the infinite suspension functor 
$$
\Sigma^{\infty}: \mathcal{S}_{/ \Z } \simeq \on{Fun}(\Z^{op}, \mathcal{S}) \to \on{Fun}(\Z^{op}, \Sp) = \Rep(\Z^{ds}) 
$$
\noindent The comodule structure map 
$$
\Sph[\mathbb{N}] \to \Sph[\mathbb{N}] \otimes \Sph[\Z]
$$
which arises canonically via Proposition \ref{saywatup} on     
$\Sph[\mathbb{N}_+] \simeq \pi_! \circ\Sigma^{\infty}(\iota)$
is the suspension of the map $\mathbb{N} \xrightarrow{(id, \iota)} \mathbb{N} \times \Z$. 

Next, we investigate the natural $\Sph[\Z_+]$-comodule structure on  
the spectrum $\Sigma^{\infty}_+ \mathbb{N}$ which arises via the identification
$$
f^*\phi_{*} \OO_{\AAA^1 / \GG_m} \simeq \Sigma^{\infty}_+ \mathbb{N},
$$
(here $f^{*}$ is the pullback map along $\Spec (\Sph) \to B \GG_m$) This arises canonically from the $\GG_m$-equivariant structure on $\OO_{\AAA^1/ \GG_m}$ via the action map 
$$
\AAA^1 \times \GG_m \to \AAA^1.
$$
The dual co-action map
$$
\OO_{\AAA^1}\simeq \Sigma^{\infty} \mathbb{N} \to \Sph[\Z] \otimes \Sph[\mathbb{N}] 
$$
is, by definition, the suspension of the map  of $E_{\infty}$-spaces $\mathbb{N} \to \mathbb{N} \times \mathbb{Z}$ given by
$$
n \mapsto (n,n)
$$
This is precisely the comodule structure map on $\pi_! \Sph[t]$ described above.  
Hence, the above equivalence $f^{*} \phi_{*} \OO_{\AAA^1/ \GG_m}\simeq \Sigma^{\infty} \mathbb{N}_+$ may be promoted to an equivalence of $\Sph[\Z]$-comodules. 

We have proven the following:

\begin{prop} \label{gradedalgebra}
The equivalence $\pi_{!}: \Rep(\Z^{ds}) \to  \QCoh(B\GG_m)$ sends the graded spectrum $\Sph[t]$ to  $\phi_{*}(\OO_{\AAA^1/\GG_m})$.
\end{prop}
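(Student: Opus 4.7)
The plan is to promote the already-established isomorphism of underlying spectra to an equivalence of $\Sph[\Z]$-comodules, since under the equivalence of Proposition \ref{saywatup} the $\infty$-category $\QCoh(B\GG_m) \simeq \Rep(\Z^{ds})$ is identified with $\coMod_{\Sph[\Z]}(\Sp)$. So the statement to verify is that, after applying the forgetful functor to spectra (i.e. pulling back along the atlas $p: \Spec(\Sph) \to B\GG_m$), the two comodule structures on $\Sph[\mathbb{N}]$ coincide.

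First I would pin down the underlying spectrum. Applying the Beck--Chevalley equivalence $p^{*}\phi_{*} \simeq \phi'_{*} p'^{*}$ arising from the right-adjointable pullback square
\[
\xymatrix{
\AAA^1 \ar[d]_{\phi'} \ar[r]^{p'} & \AAA^1/\GG_m \ar[d]^{\phi}\\
\Spec(\Sph) \ar[r]^{p} & B\GG_m
}
\]
to the structure sheaf gives $p^{*}\phi_{*}\OO_{\AAA^1/\GG_m} \simeq \phi'_{*}\OO_{\AAA^1} \simeq \Sph[\mathbb{N}]$, which matches the underlying spectrum $\pi_{!}(\Sph[t]) \simeq \Sigma^{\infty}_{+}\mathbb{N}$ of the source.

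Next, I would identify the two $\Sph[\Z]$-comodule structures. On $\pi_{!}(\Sph[t])$, the graded spectrum $\Sph[t]$ is canonically the suspension spectrum of the parametrized space $\iota: \mathbb{N} \hookrightarrow \Z$; the comonadic equivalence of Proposition \ref{saywatup} then forces the coaction to be $\Sigma^{\infty}_{+}$ of the map $\mathbb{N} \to \mathbb{N}\times\Z$, $n \mapsto (n,\iota(n))$. On the other side, the comodule structure on $\phi_{*}\OO_{\AAA^1/\GG_m}$ arises by descent from the $\GG_m$-equivariant structure on $\OO_{\AAA^1}$, which at the level of underlying $E_{\infty}$-spaces is the coaction $\mathbb{N} \to \mathbb{N} \times \Z$, $n \mapsto (n,n)$ dual to the multiplication action of $\GG_m$ on $\AAA^1$.

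The key step, and the only real obstacle, is to recognize that these two maps are literally the same map of $E_{\infty}$-spaces, since the monoid inclusion $\iota: \mathbb{N} \to \Z$ is $n \mapsto n$. Once this tautological comparison is made, applying $\Sigma^{\infty}_{+}$ identifies the two coactions, and so the equivalence $p^{*}\phi_{*}\OO_{\AAA^1/\GG_m} \simeq \Sph[\mathbb{N}] \simeq \pi_{!}(\Sph[t])$ lifts to an equivalence of $\Sph[\Z]$-comodules. Transporting back across the equivalence $\coMod_{\Sph[\Z]}(\Sp) \simeq \QCoh(B\GG_m)$ yields the claimed identification $\pi_{!}(\Sph[t]) \simeq \phi_{*}\OO_{\AAA^1/\GG_m}$, completing the proof.
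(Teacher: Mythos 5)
Your proposal is correct and mirrors the paper's argument essentially step for step: both use the Beck--Chevalley equivalence $p^{*}\phi_{*}\simeq\phi'_{*}p'^{*}$ to identify the underlying spectrum with $\Sph[\mathbb{N}]$, then describe both $\Sph[\Z]$-coactions explicitly as suspensions of the same map $\mathbb{N}\to\mathbb{N}\times\Z$, $n\mapsto(n,n)$, one coming from the Kan-extension/comonadic description of $\pi_{!}$ and the other from the $\GG_m$-equivariant structure on $\OO_{\AAA^1}$. No substantive difference to report.
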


\begin{proof}[Proof of Theorem \ref{main}]
By Theorem \ref{benchen},there is an equivalence 
$$
\Phi: \Rep(\Z^{ds}) \simeq \QCoh(B \GG_m). 
$$
Since this is symmetric monoidal, it will send $\Sph[t]$-modules to  $\phi_{*}(\OO_{\AAA^1/\GG_m})$-modules in $\QCoh(B \GG_m)$, and this identification will be symmetric monoidal as well. Indeed, one obtains the following  commutative diagram  (cf. \cite[Theorem 4.5.3.1]{lurie2016}
$$
\xymatrix{
&\Mod(\Rep(\Z^{ds}))^{\otimes} \ar[d]^{} \ar[r]^{} &  \Mod(\QCoh(B \GG_m))^{\otimes} \ar[d]^{}\\
 & \on{CAlg}(\Rep(\Z^{ds}))\times N(\Gamma_*)   \ar[r]^{} & \on{CAlg}(\QCoh(B \GG_m))\times N(\Gamma_*)
}
$$
where the top horizontal maps are equivalences of (generalized) $\infty$-operads and the vertical maps are coCartesian fibrations. On each sides, the fiber over some fixed $E_\infty$-algebra object $A$ is precisely the symmetric monoidal $\infty$-category $\Mod_A$. 
By the above analysis, the equivalence $\Phi$ sends $\Sph[t]$ to the push-forward quasi-coherent sheaf $\phi_{*}(\OO_{\AAA^1/\GG_m})$. Thus, the fiber over $\Sph[t] \times N(\Gamma_*)$ will be sent to the fiber  over  $\phi_* \OO_{\AAA^1 / \GG_m} \times N(\Gamma_*)$. 
Putting all this together, we deduce an equivalence  
$$
 \Mod_{\Sph[t]}(\Rep(\Z^{ds}))^{\otimes} \simeq  \Mod_{\phi_{*}(\OO_{\AAA^1/\GG_m})}(\QCoh( B \GG_m))^{\otimes}
$$
of symmetric monoidal $\infty$-categories.

\end{proof}

\begin{rem}
The above argument is valid if we replace the sphere with any $E_{\infty}$ ring spectrum $R$. In particular, if $R = \Z$ we recover the integral statement appearing without proof in \cite{ben2012loop}. 
\end{rem}

\section{Generic and closed points}
In this section, we show that we can interpret the underlying spectrum and associated graded functors as pullback via certain morphisms of spectral stacks.  We shall use the framework of Tannaka duality over a \emph{locally Noetherian} geometric stack, described by Lurie in \cite[Section 9.5]{lurie2016spectral}. This means that there exists a faithfully flat (i.e flat and surjective) map $\Spec (R) \to X$, where $R$ is a Noetherian $E_\infty$-ring. For the reader's convenience, we recall the defintion of Noetherian in this context:

\begin{defn}
Let $R$ be an $E_\infty$ ring. We say $R$ is coherent if $\pi_0 R$ if every finitely generated ideal of $\pi_0(R)$ is finitely presented and each homotopy group $\pi_n R$ is a finitely presented $\pi_0(R)$ module. We say $R$ is Noetherian if $R$ is coherent and $\pi_0(R)$ is Noetherian.  
\end{defn}

One sees (essentially by an application of the Hilbert basis theorem)  that  $\Sph[\mathbb{N}]$ is a Noetherian $E_\infty$-ring. As the map $\pi: \AAA^1 \to \GG_m$ is faithfully flat (recall that this can be tested by pulling back the map to each level of the simplicial diagram in which case the map is clearly faithfully flat) this makes   $\AAA^1 /\GG_m$ is a locally Noetherian geometric stack.   

We freely use the following result: 

\begin{thm}[Lurie]
Let $X, Y: \on{CAlg}^{\on{cn}} \to \mathcal{S}$ be functors and suppose $X$ is a  locally Noetherian geometric stack. Then the construction $f: Y \to X \mapsto f^*: \QCoh(X) \to \QCoh(Y)$ determines a fully faithful embedding 
$$
\Map_{}(Y, X) \to \on{Fun}_{\on{Fun}^{\otimes}(\on{CAlg}^{\on{cn}}, \mathcal{S} )}(\QCoh(X), \QCoh(Y))
$$
whose essential image is spanned by symmetric monoidal functors $F: \QCoh(X) \to \QCoh(Y)$ which preserve small colimits and connective objects. 
\end{thm}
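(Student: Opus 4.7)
The plan is to reduce to the case where both $X$ and $Y$ are affine, exploiting descent on each side of the desired equivalence. First I would observe that since any functor $Y: \on{CAlg}^{\on{cn}} \to \mathcal{S}$ is by Yoneda a colimit of its affine points, one has $\QCoh(Y) \simeq \lim_{\Spec R \to Y} \Mod_R$ and $\Map(Y, X) \simeq \lim_{\Spec R \to Y} \Map(\Spec R, X)$, with both limits indexed over the slice $(\on{CAlg}^{\on{cn}})_{/Y}$. Since $\on{Fun}^{\otimes}(\QCoh(X), -)$ sends limits in the target to limits of spaces, both sides of the desired embedding commute with this limit, reducing the theorem to the case $Y = \Spec R$. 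In this form, one must show that the space of symmetric monoidal, colimit-preserving, connectivity-preserving functors $\QCoh(X) \to \Mod_R$ coincides with $\Map(\Spec R, X)$.

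Next I would use the geometricity of $X$ to write $X = |X_\bullet|$ as the realization of a simplicial object with all terms affine (take the \v{C}ech nerve of a faithfully flat atlas $\Spec A_0 \to X$). Then $\QCoh(X)$ is the totalization of $\QCoh(X_\bullet)$, and the coface maps are right-adjointable. A candidate functor $F: \QCoh(X) \to \Mod_R$ restricts along each pullback $\QCoh(X) \to \QCoh(X_n) = \Mod_{A_n}$ to a functor inheriting the same symmetric monoidal, colimit-preserving, and connectivity-preserving properties. In the purely affine case such a functor is classified by the image of the unit, which must be a connective $E_\infty$-algebra over $R$; this yields an $E_\infty$-map $A_n \to R$ and hence a point $\Spec R \to X_n$. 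The simplicial compatibilities between these points assemble into a well-defined $R$-point of $|X_\bullet| = X$, giving essential surjectivity onto the described subcategory.

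Full faithfulness follows from the same scheme: after the reduction to $Y = \Spec R$, the space of $2$-morphisms $f \simeq g$ decomposes over the simplicial presentation, and at each affine level it coincides with the corresponding space of symmetric monoidal natural equivalences between $f^*$ and $g^*$, by the universal property of $\Mod_{A_n}$.

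The main obstacle is controlling the descent carefully enough to extract a single map $\Spec R \to X$ from the cosimplicial data of maps $\Spec R \to X_n$: this relies on Beck--Chevalley/right-adjointability for the cosimplicial diagram $\QCoh(X_\bullet)$ together with the requirement that $F$ preserve connectivity. The locally Noetherian hypothesis is precisely what guarantees that the affine recognition criterion applied level-wise can be upgraded to a criterion characterizing honest morphisms into $X$, ruling out contributions from ``generalized'' functors landing in ind-completions or failing to descend.
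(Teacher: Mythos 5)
This theorem is not proved in the paper at all: it is quoted verbatim as a citation to Lurie's \emph{Spectral Algebraic Geometry} (Proposition 9.5.4.1), so there is no ``paper proof'' to compare against. That said, your sketch has a genuine directionality error at its core. You claim that a functor $F\colon \QCoh(X)\to \Mod_R$ ``restricts along each pullback $\QCoh(X)\to \QCoh(X_n)=\Mod_{A_n}$'' to a symmetric monoidal colimit-preserving functor $\Mod_{A_n}\to \Mod_R$. There is no such restriction: $p_n^*\colon\QCoh(X)\to\Mod_{A_n}$ goes from the limit to the term, and one cannot compose $F$ (also out of $\QCoh(X)$) with $p_n^*$ to get anything out of $\Mod_{A_n}$. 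The correct mechanism is different: since $p_0\colon \Spec A_0 \to X$ is affine, $\Mod_{A_0}\simeq \Mod_{(p_0)_*\OO_{A_0}}(\QCoh(X))$, and because $F$ is symmetric monoidal and colimit-preserving it induces $\Mod_{A_0}\to \Mod_B(\Mod_R)=\Mod_B$ where $B:=F\bigl((p_0)_*\OO_{A_0}\bigr)\in\on{CAlg}(\Mod_R)$. In particular the functor you actually obtain lands in $\Mod_B$, not $\Mod_R$; the substantive work is then to show that $B$ is a \emph{faithfully flat} connective $R$-algebra (this is where the connectivity-preservation hypothesis and the structure of the atlas are essential), build the $B$-point of $X$ from the resulting $A_0\to B$, and finally descend it to an $R$-point along $R\to B$. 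Your ``main obstacle'' paragraph gestures at descent but misidentifies the obstruction, since the level-wise functors never land in $\Mod_R$ in the first place.

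Your first reduction (to $Y=\Spec R$ via left Kan extension and the limit decomposition $\QCoh(Y)\simeq\lim_{\Spec R\to Y}\Mod_R$) is fine, and the characterization of symmetric monoidal colimit-preserving functors $\Mod_A\to\Mod_R$ as $E_\infty$-ring maps $A\to R$ is the right input for the genuinely affine case. But the paragraph on the locally Noetherian hypothesis is too vague to be a proof step: the Noetherian assumption is invoked concretely to control almost perfect and flat objects (for example to deduce flatness of the algebra $B$ above from connectivity and Tor-amplitude considerations), not as a generic device to ``rule out contributions from ind-completions.'' As written, the essential-surjectivity half of the argument does not go through.
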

\noindent First we analyze what occurs with the ``underlying spectrum" functor: 
\begin{prop} \label{generic point}
There exists a unique map of stacks $1: \Spec (\Sph) \to \AAA^1 / \GG_m$ which, upon taking pullback, recovers the functor 
$$
U : \Rep(\Z) \to \Sp,  \, \, \, \, \, U((\EuScript{F}_i)_{i \in \Z}) = \colim_{i \in \Z} \EuScript{F}_i
$$
which associates to a  filtered spectrum, its underlying object. We refer to this as the generic point of $\AAA^1 / \GG_m$.
\end{prop}

\begin{rem}
One may think of this map as the $\GG_m$-equivariant inclusion $\GG_m \to \AAA^1$
where $\GG_m$ acts transitively on itself. More precisely, this may be viewed as a morphism of groupoid objects $\widetilde{\mathcal{G}_\bullet} \to \mathcal{A}_{\bullet}$ where in each simplicial degree $n$ is the map 
$$
\GG_m \times \GG_m^{n} \to \AAA^1 \times \GG_m^{\times n}
$$
given by the standard inclusion of $\GG_m$ into $\AAA^1$ on the first factor. Upon taking geometric realization of this morphism of groupoid objects, one obtains  the relevant map
$$
\Spec (\Sph) \simeq \GG_m / \GG_m \to \AAA^1 / \GG_m 
$$
\end{rem}

\begin{proof}
The proof is an application of a form of Tannaka duality as it appears in \cite{lurie2016spectral}. Let $U: \Rep(\Z) \to \Sp$, be the underlying spectrum functor. This is symmetric monoidal, which can be seen from the fact that the smash product in spectra commutes with the relevant colimits.  As a functor out of  the equivalent $\infty$-category $\QCoh(\AAA^1 / \GG_m) $ it is symmetric monoidal, and sends connective objects to connective objects in $\Sp$. Here we use the $t$-structure on $\QCoh(\AAA^1 / \GG_m)$ arising from the fact that it is a geometric stack and the connective objects $\EuScript{F} \in \QCoh(\AAA^1  / \GG_m)^{cn}$ are those which pullback to connective objects along the faithfully-flat cover $\AAA^1 \to \AAA^1 / \GG_m$.  By the considerations of the previous section, we may interpret this functor as the composition 
$$
\QCoh(\AAA^1 / \GG_m) \simeq \Mod_{\phi_{*}\OO_{\AAA^1/ \GG_m}}(\QCoh (B \GG_m)) \to \Mod_{\Sigma^{\infty}_+ \mathbb{N}}(\Sp)
$$
which sends a filtered spectrum viewed as a $\Sph[t]$-module in graded spectra to its underlying $\Sigma^{\infty}_+ \mathbb{N}$ module in spectra. Hence, a filtered spectrum $\{...\to \EuScript{F}_i  \to \EuScript{F}_{i-1} \to ... \}$ will be connective if and only if this spectrum  $\oplus_{i \in \Z} \EuScript{F}_i$ is connective, which in turn holds only if each filtered piece $\EuScript{F}_i$ is connective. Since each $\EuScript{F}_i$ is now assumed to be connective, and since connective objects are closed under colimits, we conclude that $U(\{\EuScript{F}_i\}) = \colim(...\to \EuScript{F}_i  \to \EuScript{F}_{i-1} \to ... )$ is connective. The criteria of \cite[Proposition 9.5.4.1]{lurie2016spectral}  are verified so that this functor corresponds to a unique point $1: \Spec (\Sph) \to \AAA^1 / \GG_m $.   
\end{proof}
\noindent Next we analyze the associated graded functor: 
\begin{prop} \label{closed point}
There exists a unique map of stacks $0: B \GG_m \to \AAA^1 / \GG_m $ which induces, via pullback, the functor 
$$
\gr: \Rep(\Z) \to \Rep(\Z^{ds})
$$
We refer to this as the closed point.
\end{prop}

\begin{rem}
One may think of this map as the $\GG_m$-equivariant inclusion 
$$
\Spec (\Sph) \to \AAA^1 
$$
More precisely, this is the map of groupoid objects $\mathcal{G}_\bullet \to \mathcal{A}_{\bullet}$ 
 given in degree $n$ by 
$$
\Spec (\Sph)  \times \GG_m^{\times n} \to \AAA^1 \times \GG_m^{n}
$$
where the map between the first factors is the map of affine schemes corresponding to the morphism
$$
\Sph[\mathbb{N}] \to \Sph 
$$
of $E_\infty$ rings (induced by the final map of spaces $\mathbb{N} \to pt.)$
\end{rem}

\begin{proof}
This will be another application of Tannaka duality. Recall that $\gr: \Rep(\Z) \to \Rep(\Z^{ds})$ factors as
$$
\Rep(\Z) \xrightarrow{\otimes \AAA}\Mod_{\AAA}(\Rep(\Z)) \xrightarrow{\simeq} \Rep(\Z^{ds}),
$$
where $\AAA$ is the $E_\infty$-algebra in $\on{Rep}(\Z)$  described in Remark \ref{sven}. As a filtered object, this is the sphere spectrum $\Sph$ concentrated in weight zero, with all structure maps being trivial.  
Hence we may identify the associated graded functor,  $\gr(-)$,  with tensoring by $\AAA \in \Rep(\Z)\simeq \QCoh(\AAA^1 / \GG_m)$. We abuse notation slightly and refer to the corresponding object of $\QCoh(\AAA^1 / \GG_m)$ as $\AAA$. We claim that $\AAA$ is a connective object with respect to the t-structure on $\QCoh(\AAA^1 / \GG_m)$.   Upon pulling back to the cosimplicial diagram of categories corresponding to the faithfully flat cover $\pi: \AAA^1 \to \AAA^1 / \GG_m$,  one obtains the equivalence 
$$
\pi^*(\AAA) \simeq \Sph
$$
as objects in $\QCoh(\AAA^1) \simeq \Mod_{\Sph[\mathbb{N}]}$. Since $\Sph$ is a connective $\Sph[\mathbb{N}]$-module, we conclude that $\AAA$ is a connective object  with respect to the $t$-structure on $\QCoh(\AAA^1 / \GG_m)$. Hence, tensoring by $\AAA$ sends connective objects of $\Rep(\Z) \simeq \QCoh(\AAA^1 / \GG_m)$ to connective objects of $\Mod_{\AAA}(\Rep(\Z)) \simeq \QCoh(B \GG_m)$. As this functor is clearly symmetric monoidal and preserves small colimits, it satisfies the requirements of \cite[Proposition 9.5.4.1]{lurie2016spectral} and is therefore induced by a map of stacks $0: B \GG_m \to \AAA^1 / \GG_m$.
 
\end{proof}

\section{Filtrations on cohomology}
Let $\EuScript{X} \in \on{Pre}(\on{CAlg}^{\on{cn}})$ be a spectral stack admitting a map $f: \EuScript{X} \to \AAA^1/\GG_m$. This gives rise to a $\QCoh(\AAA^1 / \GG_m)$-action on $\QCoh(\EuScript{X})$, since the pullback functor 
$$
f^*:   \QCoh(\AAA^1 / \GG_m) \to  \QCoh(\EuScript{X})
$$
will be symmetric monoidal, making $\QCoh(\EuScript{X})$ into an $\QCoh(\AAA^1 / \GG_m)$-module object (in fact it will be an algebra object) in $\EuScript{P}r_{L,st}$ with its standard symmetric monoidal structure (cf. \cite[Section 4.8.1]{lurie2016}). Note that this pullback functor is a left adjoint so it preserves colimits; hence the action of $\QCoh(\AAA^1 / \GG_m)$ on $\QCoh(\EuScript{X})$ will itself preserve colimits in each variable. We may thus consider $\QCoh(\EuScript{X})$ as a locally filtered stable $\infty$-category, in the sense of \cite{lurie2014rotation}. We review quickly what this means. 

\begin{defn}
Let $\EuScript{C}$ be a stable $\infty$-category. Then a \emph{local filtration} (cf. \cite[Definition 3.1.10]{lurie2014rotation} on $\EuScript{C}$ corresponds to a (left) action $N(\mathbb{Z}) \times \EuScript{C} \to \EuScript{C}$. By Remark 3.1.12 of loc. cit., this is equivalent to an action of $\Rep^{\on{fin}}(\Z)$ which is separately exact in each variable. If $\EuScript{C}$ is presentable, this  induces an action of $\on{Ind}(\Rep^{\on{fin}}(\Z)) \simeq \Rep(\Z)$ which preserves all colimits separately in each variable. This makes $\EuScript{C}$ into a $\Rep(\Z)$-module in the the $\infty$-category $\EuScript{P}r_{L,st}$ of presentable stable $\infty$-categories. 

\end{defn}

\begin{rem}
As described in \cite[Remark 3.1.14]{lurie2014rotation}, one may think of a local filtration on a stable $\infty$-category $\EuScript{C}$ as additional data allowing us to view the objects as equipped with filtrations; roughly this corresponds to shift functors $X \mapsto X(n) $ together with natural maps $X(n) \to X(m)$ together with associated higher coherence data. 
\end{rem}

In favorable situations, this can be refined a bit further: a map of stacks $\EuScript{X} \to \AAA^1 / \GG_m$ gives rise  to a filtration on the $E_{\infty}$-algebra of global sections  $\R \Gamma(X ,\OO_X)$ of the underlying spectral stack $X:= \Spec (\Sph) \times_{\AAA^1 / \GG_m} \EuScript{X}$ 

\begin{prop}
Let $\EuScript{X}$ be a geometric stack admitting a map  to $f: \EuScript{X} \to \AAA^1 / \GG_m $. Assume further that one of the two conditions hold:
\begin{itemize}
    \item  $\EuScript{X}$ is representable by a spectral scheme (cf. \cite[Definition 1.1.2.8]{lurie2016spectral}); 
    \item  The morphism $f$ is cohomologically of finite dimension in the sense that for every connective $E_{\infty}$-ring $R$, and every $R$-point $\eta: \Spec (R) \to \AAA^1 / \GG_m$, the fiber product $ \EuScript{X} \times_{\AAA^1 / \GG_m} \Spec (R)$ is  of finite cohomological dimension over $\Spec (R)$; namely, the global sections functor sends $\QCoh(\EuScript{X} \times_{\AAA^1 / \GG_m} \Spec (R))_{\geq 0} \to {\Mod_R}_{\geq n}$ for some $n$.
 
\end{itemize}
Then, the global sections  $\R \Gamma(X, \OO_{X})$ of the underlying stack $X$ comes equipped with a natural filtration.   
\end{prop}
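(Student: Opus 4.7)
The plan is to construct the filtration as the pushforward of the structure sheaf along $f$. The morphism $f: \EuScript{X} \to \AAA^1/\GG_m$ admits a lax symmetric monoidal direct image
\[
f_{*}: \QCoh(\EuScript{X}) \to \QCoh(\AAA^1/\GG_m),
\]
so the object $f_{*}\OO_{\EuScript{X}}$ is naturally an $E_\infty$-algebra in $\QCoh(\AAA^1/\GG_m)$. Via the symmetric monoidal equivalence of Theorem \ref{main}, this is precisely a filtered $E_\infty$-ring, which will furnish the desired filtration on the cohomology once I identify its underlying spectrum with $\R\Gamma(X, \OO_X)$.

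To make this identification, I would appeal to Proposition \ref{generic point}: the underlying spectrum functor $U: \Rep(\Z) \to \Sp$ is identified with pullback $1^{*}$ along the generic point $1: \Spec \Sph \to \AAA^1/\GG_m$. Consider the Cartesian square
\[
\xymatrix{
X \ar[d]_{g} \ar[r]^{1'} & \EuScript{X} \ar[d]^{f} \\
\Spec \Sph \ar[r]^{1} & \AAA^1/\GG_m.
}
\]
If the Beck--Chevalley transformation $1^{*}f_{*} \to g_{*}(1')^{*}$ is an equivalence when evaluated on $\OO_{\EuScript{X}}$, then $U(f_{*}\OO_{\EuScript{X}}) \simeq g_{*}\OO_{X} \simeq \R\Gamma(X, \OO_{X})$, and the filtered $E_\infty$-ring structure of $f_{*}\OO_{\EuScript{X}}$ descends to give the advertised filtration. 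Since all functors in sight are lax symmetric monoidal, the algebra structure is preserved throughout.

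The main obstacle is therefore justifying the above base change equivalence under each of the stated hypotheses. Under condition (ii), finite cohomological dimension of $f$ at every connective $R$-point $\Spec R \to \AAA^1/\GG_m$ is essentially tailor-made to ensure that $f_{*}$ commutes with such pullbacks: one combines the flat base change results in \cite{lurie2016spectral} with a standard boundedness reduction whose convergence is controlled by the cohomological dimension. Under condition (i), since $\EuScript{X}$ is a spectral scheme the morphism $f$ is representable and $X = \Spec\Sph \times_{\AAA^1/\GG_m} \EuScript{X}$ is again a spectral scheme, so the base change formula of \cite{lurie2016spectral} for qcqs morphisms of spectral schemes applies directly. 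In either case the Beck--Chevalley map is an equivalence, and $\R\Gamma(X, \OO_X) \simeq U(f_{*}\OO_{\EuScript{X}})$ inherits a natural filtration as the underlying spectrum of the filtered $E_\infty$-ring $f_{*}\OO_{\EuScript{X}}$.
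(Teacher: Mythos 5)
Your proof is correct and follows the same strategy as the paper: view $f_{*}\OO_{\EuScript{X}}$ as a filtered spectrum under the equivalence $\QCoh(\AAA^1/\GG_m)\simeq\Rep(\Z)$, and then use the Beck--Chevalley transformation in the Cartesian square over the generic point $1:\Spec\Sph\to\AAA^1/\GG_m$ to identify its underlying spectrum with $\R\Gamma(X,\OO_X)$. If anything, you are slightly more careful than the paper in spelling out how each of the two hypotheses is used to justify the base change equivalence (the paper cites \cite[Proposition 9.1.5.7]{lurie2016spectral} without separating the cases), but the argument is the same one.
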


\begin{proof}
Let $f: \EuScript{X} \to \AAA^1 / \GG_m$ be a filtered stack satisfying one of the above criteria. By \cite[Proposition 9.1.5.7]{lurie2016spectral}, the diagram of stable $\infty$-categories 
$$
\xymatrix{
&\QCoh(\AAA^1 / \GG_m) \ar[d]^{1^*} \ar[r]^{f^*} &  \QCoh(\EuScript{X}) \ar[d]^{1'^*}\\
 & \Sp  \ar[r]^{f'^*} & \QCoh(X)
}
$$
is right adjointable, and so, the Beck-Chevalley natural transformation  of functors  $ 1^* f_*  \simeq   f'_*1'^{*}: \QCoh(\EuScript{X}) \to \Sp$ is an equivalence. Hence, 
$$
\R \Gamma(X, \OO_{X}) = f'_{*} \OO_{X} \simeq f'_{*} 1'^{*}\OO_{\EuScript{X}} \simeq 1^* f_* \OO_{\EuScript{X}}.
$$
Of course $f_* \OO_{\EuScript{X}}$ will, by our main theorem, now give rise to an object in $\Rep(\Z)$, hence a filtered spectrum. Moreover, we have identified the functor $1^*$ with the functor associating the underlying object of a filtration; thus  $1^* f_{*}  \OO_{\EuScript{X}}$ will be the underlying object of the filtered spectrum  corresponding to $f_* \OO_{\EuScript{X}}$. 
\end{proof} 

\section{Remarks on $t$-structures}
We end this note with  several observations about the various $t$-structures on filtered spectra. In the previous section we made use of the canonical  $t$-structure  on $\QCoh(\AAA^1 / \GG_m)$ described in Proposition \ref{quasicoherentsheavesonstack}.   This corresponds to the \emph{neutral t-structure} on $\Rep(\Z)$ which we now recall:

\begin{const}
Let $\Rep(\Z)_{\geq 0}$ be the full subcategory of $\Rep(\Z)$ consisting of filtered objects $F^*X$ for which $F^n X \in \Sp_{\geq 0}$ for all $n \in \Z$. Then $(\Rep(\Z)_{\geq 0}, \Rep(\Z)_{\leq 0})$ is a t-structure compatible with the Day convolution symmetric monoidal structure on  $\Rep(\Z)$. 
\end{const}

\begin{prop}
The neutral $t$-structure is induced by  the t-structure on $\QCoh(\AAA^1/\GG_m)$. In other words, the symmetric monoidal equivalence  of Theorem 1.1 is $t$-exact 
\end{prop}

\begin{proof} 
 We will show that (co-)connective objects of  $\Rep(\Z)$ are sent to (co)-connective objects  of $\QCoh(\AAA^1 / \GG_m)$ and vice versa. Let $\pi: \AAA^1 \to \AAA^1 / \GG_m$ denote the faithfully flat atlas for $\AAA^1/\GG_m$. The pullback functor 
$$
\pi^*: \QCoh(\AAA^1 / \GG_m) \to \QCoh(\AAA^1) \simeq \Mod_{\Sph[\mathbb{N}]}
$$
may be identified via our main theorem, with the functor assigning to a filtered spectrum  $F^*(X)$ the coproduct $\bigoplus_{\Z} F^*(X)$, which has a natural $\Sph[\mathbb{N}]$-module structure. This can be seen from the equivalence 
$$
\Rep(\Z) \simeq \Mod_{\Sph[t]}(\Rep(\Z^{ds})) \xrightarrow{\Phi} \Mod_{\Sph[t]}(\QCoh(B \GG_m)) 
$$
together with the fact that the (symmetric monoidal) pullback functor 
$$
\pi^* : \QCoh(B\GG_m) \to \Sp
$$
induced by the atlas $\Spec (\Sph) \to B \GG_m$ is itself identified with the colimit functor ($\pi_!$ in the notation of this paper). One can now easily see that the coproduct $\bigoplus_{\Z} F^*(X)$ will be connective as an $\Sph[\mathbb{N}]$-module if and only if each $F^i(X)$ is connective. The same exact arguments go through to see the coconnective part of the $t$-structure is preserved. 
\end{proof}

\begin{rem}
The $\infty$-categories $\Rep(\Z)$ and $\Rep(\Z^{ds})$ come equipped with several different $t$-structures which are distinct from the neutral $t$-structure. For example, on filtered spectra, one has the Belinson $t$-structure (as studied in for example \cite{beilinson1987derived,bhatt2019toological}) on filtered spectra. It is an interesting question whether or not one can describe this $t$-structure completely geometrically by decomposing the stack $\AAA^1 / \GG_m$ (for example by way of recollements cf. \cite{barwick2016note})). 
\end{rem}
\bibliographystyle{amsalpha}
\bibliography{biblio}
\end{document}